\documentclass[12pt]{article}
\usepackage{amsmath}
\usepackage{amsfonts}
\usepackage{amssymb}
\usepackage{amsthm}
\usepackage{stackrel}

\usepackage{color}

\newtheorem{df}{Definition}[section]
\newtheorem{thm}{Theorem}[section]

\newcommand{\R}{{\rm I}\kern-0.18em{\rm R}}
\newcommand{\1}{{\rm 1}\kern-0.25em{\rm I}}
\newcommand{\E}{{\rm I}\kern-0.18em{\rm E}}
\newcommand{\p}{{\rm I}\kern-0.18em{\rm P}}
\makeatletter

\title{Outliers, the Law of Large Numbers, Index of Stability and Heavy Tails}
\author{Lev B. Klebanov\footnote{Department of Probability and Mathematical Statistics, MFF, Charles University, Prague, Czech Republic; e-mail: lev.klebanov@mff.cuni.cz}, Ashot V. Kakosyan\footnote{Yerevan State University, Yerevan, Armenia.} and Andrea Karlova\footnote{Institute of Information Theory and Automation, CAS, Prague, Czech Republic.}}
\date{}
\begin{document}
\maketitle

\begin{abstract}
We are trying to give a mathematically correct definition of outliers. Our approach is based on the distance between two last order statistics and appears to be connected to the law of large numbers.

\vspace{0.2cm}
\noindent
Key words: outliers, law of large numbers, heavy tails, stability index.
\end{abstract}

\section{Introduction and the statement of the problem}\label{se1}
\setcounter{equation}{0}
Let us consider a notion of {\bf outliers}. Wikipedia, the free encyclopedia defines outliers in the following way: ``In statistics, an outlier is an observation point that is distant from other observations. An outlier may be due to variability in the measurement or it may indicate experimental error; the latter are sometimes excluded from the data." Some points from this definition need essential clarification. Namely, the main problem is to define what sense have the words ``distant from other observations". One may understand them in the sense that the absolute value of the difference between this observation and empirical mean is larger that a number (say $k$) of empirical standard deviations $s$. For example, we say $X_j$ is an outlier if $|X_j-\bar{x}|>k s$, where $\bar{x} = \sum_{i=1}^{n}X_i/n$ and $s= (\sum_{i=1}^{n} (X_i-\bar{x})^2/n)^{1/2}$. Here $X_1, \ldots ,X_n$ are independent identically distributed (i.i.d.) observations. This notion will not lead to distributions with heavy tails and is, in a sense, miss leading. It was criticized in \cite{Kl, KV}. Here we consider another approach. Namely, outlier in our sense is an extremal observation which is larger in its absolute value than $1/\kappa$ times previous extremal observation.

Let us give precise definition.
\begin{df}\label{de1}
Let $X_1, \ldots ,X_n$ be i.i.d. random variables, and $X_{(1)}, \ldots , X_{(n)}$ be the observations ordered in its absolute values (from minimal to maximal). We say $X_{(n)}$ is an outlier of order $1/\kappa$ if $X_{(n-1)}\leq \kappa X_{(n)}$, where $\kappa \in (0,1)$ is a fixed number.
\end{df} 
In this paper we find a boundary for probability of outlier of order $1/\kappa$ and show its connection with the index of stability.

\section{Main results}\label{se2}
\setcounter{equation}{0}

Let us calculate the probability of $X_{(n)}$ to be an outlier of order $1/\kappa$ for a fixed $\kappa \in (0,1)$. Suppose that $X_1, \ldots ,X_n$ are i.i.d. random variables, and $X_{(1)}, \ldots , X_{(n)}$ are the observations ordered in its absolute values. Suppose that random variable $|X_1|$ has absolute continuous distribution function $F(x)$, and $p(x)$ is its density. Denote by $p_{n-1,n}(x,y)$ the common density of $X_{(n-1)}$ and $X_{(n)}$. We have (see, for example, \cite{Dav}) 
\begin{equation}\label{eq1}
p_{n-1,n}(x,y) = n(n-1) F^{n-2}(x)p(x) p(y),
\end{equation}
for $x \leq y$. Therefore the probability of the event that $X_{(n-1)} \leq \kappa X_{(n)}$ is
\begin{equation}\label{eq2}
\p\{X_{(n-1)} \leq \kappa X_{(n)}\} = n \int_{0}^{\infty}F^{n-1}(\kappa y)p(y)dy.
\end{equation}
Let us try to study limit behavior of the probability (\ref{eq2}) for large values of sample size $n$. We have
\begin{equation}\label{eq3}
\p\{X_{(n-1)} \leq \kappa X_{(n)}\}=n \int_{0}^{\infty}F^{n-1}(x)\kappa p(\kappa x)\frac{p(x)}{\kappa p(\kappa x)}dx. 
\end{equation}
Assume that 
\begin{equation}\label{eq4}
\lim_{x \to 0}F^{n}(\kappa x)\frac{p(x)}{p(\kappa x)} =0. 
\end{equation}
Integrating by parts in (\ref{eq3}) gives us
\begin{equation}\label{eq5}
\begin{aligned}
\p\{X_{(n-1)} \leq \kappa X_{(n)}\}= \lim_{x \to \infty}\frac{p(x)}{\kappa p(\kappa x)}-\\ -\int_{0}^{\infty}F^{n}(\kappa x)\Bigl(\frac{p^{\prime}(x)}{\kappa p(\kappa x)}- \frac{p(x) p^{\prime}(\kappa x)}{p^2(\kappa x)} \Bigr) dx
\end{aligned}
\end{equation}
If the function
\[ \Bigl(\frac{p^{\prime}(x)}{\kappa p(\kappa x)}- \frac{p(x) p^{\prime}(\kappa x)}{p^2(\kappa x)} \Bigr) \]
is integrable over $(0,\infty)$ then 
\[ \int_{0}^{\infty}F^{n}(\kappa x)\Bigl(\frac{p^{\prime}(x)}{\kappa p(\kappa x)}- \frac{p(x) p^{\prime}(\kappa x)}{p^2(\kappa x)} \Bigr) dx \to 0 \]
as $n \to \infty$.
Therefore,
\begin{equation}\label{eq6}
\lim_{n \to \infty}\p\{X_{(n-1)} \leq \kappa X_{(n)}\}= \lim_{x \to \infty}\frac{p(x)}{\kappa p(\kappa x)},
\end{equation}
assuming that the limit in right-hand side of (\ref{eq6}) exists.

Finally, we obtain the following result.
\begin{thm}\label{th1}
 Suppose that $X_1, \ldots ,X_n$ are i.i.d. random variables, and $X_{(1)}, \ldots , X_{(n)}$ are the observations ordered in its absolute values. Let random variable $|X_1|$ has absolute continuous distribution function $F(x)$, and let $p(x)$ be its density. Suppose that $p(x)$ is regularly varying function of index $-(\alpha+1)$ on infinity, function 
 \[ \Bigl(\frac{p^{\prime}(x)}{\kappa p(\kappa x)}- \frac{p(x) p^{\prime}(\kappa x)}{p^2(\kappa x)} \Bigr) \]
 is integrable over $(0,\infty)$, and 
 \[ \lim_{x \to 0}F^{n}(\kappa x)\frac{p(x)}{p(\kappa x)} =0.  \]
Then
\begin{equation}\label{eq7}
\lim_{n \to \infty}\p\{X_{(n-1)} \leq \kappa X_{(n)}\}=\kappa^{\alpha}.
\end{equation}
\end{thm}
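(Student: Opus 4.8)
The proof essentially amounts to evaluating the limit on the right-hand side of equation~(\ref{eq6}), namely $\lim_{x\to\infty} p(x)/(\kappa\, p(\kappa x))$, under the regular variation hypothesis. The plan is to invoke the preceding computation: the hypotheses of the theorem are precisely the conditions (\ref{eq4}) and the integrability requirement that were used to derive (\ref{eq6}), so once we know the limit in (\ref{eq6}) exists and equals $\kappa^{\alpha}$, we are done. Thus the entire content of the proof is a short lemma about regularly varying functions.

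First I would recall the definition: $p$ is regularly varying of index $-(\alpha+1)$ at infinity means $p(x) = x^{-(\alpha+1)} L(x)$ for a slowly varying $L$, i.e. $\lim_{x\to\infty} L(\lambda x)/L(x) = 1$ for every $\lambda>0$. Then I would compute directly
\begin{equation}\label{eqproof1}
\frac{p(x)}{\kappa\, p(\kappa x)} = \frac{x^{-(\alpha+1)}L(x)}{\kappa\,(\kappa x)^{-(\alpha+1)}L(\kappa x)} = \frac{\kappa^{\alpha+1}}{\kappa}\cdot\frac{L(x)}{L(\kappa x)} = \kappa^{\alpha}\cdot\frac{L(x)}{L(\kappa x)}.
\end{equation}
Taking $x\to\infty$ and using $L(\kappa x)/L(x)\to 1$ (equivalently $L(x)/L(\kappa x)\to 1$, since $\kappa>0$ is fixed), we get $\lim_{x\to\infty} p(x)/(\kappa\, p(\kappa x)) = \kappa^{\alpha}$. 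In particular the limit exists, which is the last hypothesis needed to legitimize passing from (\ref{eq5}) to (\ref{eq6}).

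I would then close by substituting this value into (\ref{eq6}): under the stated integrability and vanishing-at-zero hypotheses, the integral term in (\ref{eq5}) tends to $0$ and the boundary term at infinity tends to $\kappa^\alpha$, so $\lim_{n\to\infty}\p\{X_{(n-1)}\leq \kappa X_{(n)}\} = \kappa^\alpha$, which is (\ref{eq7}).

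I do not expect any serious obstacle here, since the hard analytic work (the integration by parts and the control of the two limits) has already been carried out in the body of the text preceding the theorem; the only remaining point is the elementary identity (\ref{eqproof1}) and the definition of slow variation. The one place to be slightly careful is to make sure the convergence $L(x)/L(\kappa x)\to 1$ is applied correctly: it follows from the defining property of slow variation applied with $\lambda = 1/\kappa$, i.e. $L(x) = L\big(\kappa x \cdot \tfrac1\kappa\big)$, so $L(x)/L(\kappa x)\to 1$ as $\kappa x\to\infty$, which is equivalent to $x\to\infty$. No uniform convergence (Potter bounds) is needed because $\kappa$ is a fixed constant, not varying with $x$.
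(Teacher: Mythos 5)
Your proposal is correct and follows essentially the same route as the paper: the paper's proof simply appeals to the integration-by-parts derivation preceding the theorem together with the definition of regular variation, which is exactly what you do, only spelling out the computation $p(x)/(\kappa p(\kappa x)) = \kappa^{\alpha} L(x)/L(\kappa x) \to \kappa^{\alpha}$ explicitly. Nothing further is needed.
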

\begin{proof}
The statement of the Theorem follows from considerations given above and from the definition of regularly varying function (see, for example \cite{Sen}).
\end{proof}

\section{Connection to the law of large numbers and statistical definition of stability index}
\setcounter{equation}{0}

Theorem \ref{th1} shows that there is a connection between stable distribution and the probability of presence of $1/\kappa$ outliers.
Namely, the condition ``$p(x)$ is regularly varying function of index $-(\alpha+1)$ on infinity" implies that corresponding random variables $X_1, \ldots ,X_n$ belong to the region of attraction of $\alpha$-stable distribution. The probability (\ref{eq7}) is defined by index $\alpha$ in unique way, and increase with decreasing $\alpha$. 

For the first glance, it is not clear why there is no law of large numbers in the case of $\alpha \in (0,1)$. Really, in the case of symmetric distributions, it seems to be possible, that large positive observations may be compensated by corresponding negative observations, coming into empirical mean with the same probability as positive.
For $\alpha \in (0,1)$ the limit probability for $X_{(n-1)}$ to be less that $\kappa X_{(n)}$ is greater than $\kappa$ itself. It shows, that very often the ``maximal" observation $X_{(n)}$ cannot be ``compensated" by smaller observations. It gives us an intuitive explanation of why there is no law of large numbers for the case of $\alpha \in (0,1)$. 

Is it possible to use the relation (\ref{eq7}) to define the stability index $\alpha$? Of course, it is possible theoretically, but is impossible statistically, because we cannot pass to limit for any large (but finite) number $n$ of observations. However, the probability $\p\{X_{(n-1)}< \kappa X_{(n)}\}$ (for fixed $\kappa$ and fixed $n$) may be statistically estimated. Such probability does not define ``true" value of $\alpha$, however, small value of such estimator for $\alpha$ shows that empirical mean is not close to any constant at least for corresponding values of $n$.

\section{Conclusion}
Definition \ref{de1} give us ``working" notion of outliers. This notion appears to be connected with index of stability. It may be used for statistical estimation of suitable variant of this index. Theorem \ref{th1} provides empirical explanation of the absence of the law of large numbers for the case of stability index smaller than 1.

\section*{Acknowledgment}
The work was partially supported by Grant GACR 16-03708S.

\end{document}